\newcommand\cF{{\mathcal F}}
\newcommand\cR{{\mathcal R}}
\newcommand\cU{{\mathcal U}}
\theoremstyle{plain}
\newtheorem{theorem}{Theorem}[section]
\newtheorem{conjecture}[theorem]{Conjecture}
\theoremstyle{definition}
\newtheorem{claim}[theorem]{Claim}
\newcommand\tref[1]{Theorem~\ref{thm:#1}}
\newcommand\cref[1]{Corollary~\ref{cor:#1}}
\newcommand\clref[1]{Claim~\ref{clm:#1}}
\newcommand\sref[1]{Section~\ref{sec:#1}}
\begin{document}

\title{On the ratio of maximum and minimum degree in maximal intersecting families}

\author{Zolt\'an L\'or\'ant Nagy\thanks{Alfr\'ed R\'enyi Institute of Mathematics, P.O.B. 127, Budapest H-1364, Hungary and E\"{o}tv\"{o}s Lor\'{a}nd University, Department of Computer Science, H-1117 Budapest P\'{a}zm\'{a}ny P\'{e}ter s\'{e}t\'{a}ny 1/C. Email: nagyzoltanlorant@gmail.com. The author was supported by the Hungarian National Foundation for Scientific Research (OTKA), Grant no. K 81310.} \and Lale \"Ozkahya\thanks{Email: ozkahya@illinoisalumni.org} \and Bal\'azs Patk\'os\thanks{Alfr\'ed R\'enyi Institute of Mathematics, P.O.B. 127, Budapest H-1364, Hungary. Email: patkos@renyi.hu. Research is supported by OTKA Grant PD-83586 and
 the J\'anos Bolyai Research Scholarship of the Hungarian Academy of Sciences.} \and M\'at\'e Vizer\thanks{Central European University, Department of Mathematics and its Applications, Budapest, N\'ador u. 9. H-1051, Hungary. Email: vizermate@gmail.com.}}

\maketitle

\begin{abstract}
To study how balanced or unbalanced a maximal intersecting family $\cF\subseteq \binom{[n]}{r}$ is we consider the ratio $\cR(\cF)=\frac{\Delta(\cF)}{\delta(\cF)}$ of its maximum and minimum degree. We determine the order of magnitude of the function $m(n,r)$, the minimum possible value of $\cR(\cF)$, and establish some lower and upper bounds on the function $M(n,r)$, the maximum possible value of $\cR(\cF)$. To obtain constructions that show the bounds on $m(n,r)$ we use a theorem of Blokhuis on the minimum size of a non-trivial blocking set in projective planes.
\end{abstract}

\textit{Keywords}: Intersecting families, maximum and minimum degree, blocking sets

\textit{AMS subject classification}: 05D05, 05B25

\section{Introduction}
A family $\cF$ of sets is said to be intersecting if $F_1 \cap F_2 \ne \emptyset$ holds for any $F_1,F_2 \in \cF$. In their
seminal paper, Erd\H os, Ko and Rado showed \cite{EKR} that if $\cF$ is an intersecting family of $r$-subsets of an $n$-element set $X$ (we denote this by $\cF\subseteq \binom{X}{r}$), then $|\cF| \le\binom{n-1}{r-1}$ provided that $2r \le n$. Lots of generalizations of the above theorem have been considered ever since and lots of researchers have been interested in describing how intersecting families may look like. One of the quantities concerning intersecting families that has been studied \cite{DF, LP} is the unbalance $\cU(\cF)=|\cF|-\Delta(\cF)$ where $\Delta(\cF)$ denotes the maximum degree in $\cF$. In this paper we define another notion to measure the balancedness or unbalancedness of $\cF$: if $\delta(\cF)$ denotes the minimum degree in $\cF$, then our aim is to determine how small and how large $\cR(\cF)=\frac{\Delta(\cF)}{\delta(\cF)}$ can be. To avoid $\delta(\cF)=0$ we will always assume that $\cup \cF=X$. Also, as by considering appropriate subfamilies one could modify the value $\cR(\cF)$ easily, we will restrict our attention to \textit{maximal} intersecting families, i.e. families with the property $G \in \binom{X}{r}\setminus \cF \Rightarrow \exists F \in \cF \hskip 0.2truecm F \cap G =\emptyset$. For sake of simplicity we will also assume that the underlying set $X$ of our families is $[n]=\{1,2,...,n\}$.

With the above notation and motivation we define our two main functions as follows:
\[
M(n,r)=\max\left\{\cR(\cF): \hskip 0.2truecm\cF \subseteq \binom{[n]}{r} \text{is maximal intersecting with}\ \cup \cF=[n]\right\},
\]
\[
m(n,r)=\min\left\{\cR(\cF): \hskip 0.2truecm\cF \subseteq \binom{[n]}{r} \text{is maximal intersecting with}\ \cup \cF=[n]\right\}.
\]
The family giving the extremal size in the theorem of Erd\H os, Ko and Rado seems to be a natural candidate for
achieving the value of $M(n,r)$. In fact, most families $\cF$ that occur in the literature have $\cR(\cF)=\Theta(\frac{n}{r})$. In \sref{2} we will prove the following theorems showing that both $M(n,r)$ and $m(n,r)$ have different order of magnitude.
\begin{theorem}
\label{thm:max}

\textbf{(i)} For any values of $n$ and $r$, we have $M(n,r) \le n+r^r$. In particular, if $r < \frac{\log n}{\log\log n}$, then $M(n,r) \le (1-o(1))n$ holds,

\noindent\textbf{(ii)} if $2r+2<n$, then 
\[M(n,r)\ge n-2r+3 -\frac{n-2r+2}{\binom{2r-3}{r-2}} 
\]
holds, in particular if $\omega(1)=r< \frac{\log n}{\log\log n}$, then
we obtain $M(n,r) \sim n$.
\end{theorem}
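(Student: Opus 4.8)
The plan is to prove both parts by exploiting the self-blocking description of maximal intersecting families: $\cF$ is maximal intersecting exactly when $\cF=\{F\in\binom{[n]}{r}: F\cap G\neq\emptyset \text{ for all } G\in\cF\}$, so every member of $\cF$ is a transversal of $\cF$ and hence the covering number satisfies $\tau(\cF)\le r$. For part (i) I would reduce everything to the estimate $\Delta(\cF)\le (n+r^r)\delta(\cF)$ and split according to $\tau=\tau(\cF)$. If $\tau=r$, I would invoke the classical fact that an $r$-uniform intersecting family of covering number $r$ has at most $r^r$ members; then $\Delta\le|\cF|\le r^r\le (n+r^r)\delta$ because $\delta\ge1$. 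The substantial case is $\tau\le r-1$.

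In that case I would fix a minimum-degree vertex $x$ together with its $\delta$ sets $A_1,\dots,A_\delta$ and a maximum-degree vertex $y$; since every member of $\cF$ meets each $A_i$, I would estimate $\deg(y)$ by partitioning the sets through $y$ according to their (first) point of intersection with a chosen $A_i$, the contribution of $x$ itself being at most $\delta$. Iterating this over several of the $A_i$ turns the estimate into a codegree recursion whose boundary terms are controlled by the $n$ available points and whose degenerate terms are collected into a subfamily of large covering number, bounded again by $r^r$. The main obstacle here is to run this recursion without losing a spurious factor $r$ at each step and, more seriously, to handle the situation in which every set through $x$ also passes through $y$ (exactly what happens in the construction below, and what makes the naive recursion vacuous); to deal with it I would choose the $A_i$ to avoid $y$ whenever possible and treat the ``twin'' case $\mathrm{codeg}(x,y)=\delta$ directly via the transversal of size $\le r-1$. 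The second assertion of (i) is then immediate: if $r<\frac{\log n}{\log\log n}$ then $r^r=n^{1-o(1)}=o(n)$, so $n+r^r=(1+o(1))n$.

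For part (ii) I would exhibit an explicit maximal intersecting family attaining ratio $\sim n$. Partition $[n]$ into a core $Z$ with $|Z|=2r-3$, a single apex $y$, and $n-2r+2$ free points $P$, and set
\[
\cF=\Big\{\{y\}\cup W: W\in\tbinom{Z}{r-1}\Big\}\ \cup\ \Big\{\{y\}\cup W'\cup\{p\}: W'\in\tbinom{Z}{r-2},\ p\in P\Big\}\ \cup\ \tbinom{Z}{r}.
\]
Any two $(r-1)$- or $r$-subsets of the $(2r-3)$-set $Z$ meet (their sizes sum to more than $|Z|$) and all sets of the first two types share $y$, so $\cF$ is intersecting; a short case analysis shows no further $r$-set can be added, so $\cF$ is maximal. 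Computing degrees, every free point has degree $\binom{2r-3}{r-2}$, every core point has strictly larger degree, and $\deg(y)=\binom{2r-3}{r-1}+\binom{2r-3}{r-2}|P|=\binom{2r-3}{r-2}(n-2r+3)$ is the maximum. Hence $\cR(\cF)=n-2r+3$, which is at least the stated bound and yields $M(n,r)\ge n-2r+3$.

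The delicate points in (ii) are the maximality check (a candidate $V\cup\{p\}$ with $V\in\binom{Z}{r-1}$ is blocked by $\{y\}\cup(Z\setminus V)\cup\{p'\}$ for $p'\neq p$, while a set using two or more free points misses some member of $\binom{Z}{r}$ or some set of the first type) and the verification that the free points, rather than the core points, realise the minimum degree; both reduce to Pascal-type identities among the binomials $\binom{2r-4}{\,\cdot\,}$. Finally, if $r=\omega(1)$ and $r<\frac{\log n}{\log\log n}$ then $2r=o(n)$, so $n-2r+3\sim n$, and together with part (i) this gives $M(n,r)\sim n$.
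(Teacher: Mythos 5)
Your part (ii) is correct, but your part (i) is not a proof in the case that matters. The split by covering number and the appeal to the classical bound $|\cF|\le r^r$ when $\tau(\cF)=r$ are fine, but for $\tau\le r-1$ you only describe a codegree-recursion plan, and you yourself name the two obstacles that sink it: each step of the recursion loses a factor $r$ (so by itself it can only reproduce the bound $r^r$, which is useless against the target $n\cdot\delta(\cF)$), and the recursion is vacuous when every set through the minimum-degree vertex also passes through the maximum-degree vertex --- which is exactly what happens in the extremal construction, so this case cannot be avoided by ``choosing the $A_i$ to avoid $y$,'' and ``treat the twin case directly via the transversal of size $\le r-1$'' is not an argument. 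The idea you are missing (and which the paper uses) is to classify not the whole family by $\tau$ but each set individually: call $F\in\cF$ reducible if there is $x\in F$ with $(F\setminus\{x\})\cap F'\ne\emptyset$ for every $F'\in\cF$. The non-reducible sets number at most $r^r$ by precisely the recursion you cite (every $j$-subset $J$, $j<r$, of such an $F$ lies in some $F\setminus\{x\}$, hence misses some member of $\cF$, giving $d_j\le r\,d_{j+1}$). For a reducible $F$, maximality is used \emph{positively}: swapping the removable element for a fixed minimum-degree vertex $y$ produces a set that meets all of $\cF$ and therefore belongs to $\cF$ and contains $y$; moreover at most $n-r+1$ sets can share an image, since they must all contain the $r-1$ elements of the image other than $y$. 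This yields $\Delta(\cF)\le|\cF|\le(n-r+1)\delta(\cF)+r^r$, i.e.\ the inequality you wanted. Your sketch never uses maximality to manufacture new members of $\cF$, and I do not see how a pure codegree recursion can produce the additive structure $n\delta+r^r$; as it stands, part (i) has a genuine gap.

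Part (ii), by contrast, is correct and in fact cleaner than the paper's own construction. The paper uses the same core of size $2r-3$ and the same apex, but it removes from the pendant sets those meeting the core in one fixed $(r-2)$-set $S_0$ and compensates with a fourth class $\{(S\setminus S_0)\cup\{i\}\}$; this is what costs it the correction term $\frac{n-2r+2}{\binom{2r-3}{r-2}}$ in the lower bound. Your family, with all of $\binom{Z}{r-2}$ allowed and no fourth class, is maximal provided there are at least two free points (guaranteed by $2r+2<n$): the only nontrivial case $T=V\cup\{p\}$, $V\in\binom{Z}{r-1}$, is blocked by $\{y\}\cup(Z\setminus V)\cup\{p'\}$ with $p'\ne p$, as you say. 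The degree computations check out: free points have degree $\binom{2r-3}{r-2}$, core points have degree $\binom{2r-3}{r-2}+\binom{2r-4}{r-3}(n-2r+2)$ by Pascal and the symmetry $\binom{2r-3}{r-1}=\binom{2r-3}{r-2}$, and the apex has degree $\binom{2r-3}{r-2}(n-2r+3)$, so $\cR(\cF)=n-2r+3$, which is slightly better than the stated bound and implies it. So once part (i) is repaired along the lines above, your argument gives the theorem, with (ii) marginally improved.
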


At first sight, the bound $r^r$ seems to be very weak, but we will show in \sref{3} that it cannot be strengthened too much in general. 

A trivial lower bound on $m(n,r)$ is 1. The next theorem states that $\frac{n}{r^2}$ is another general lower and we construct families showing that this is the order of magnitude of $m(n,r)$ as long as $r=O(n^{1/2})$. For larger values of $r$ we obtain regular maximal families showing the tightness of the trivial lower bound.

\begin{theorem}
\label{thm:min}

\textbf{(i)} For any $n$ and $r$, the inequality $m(n,r) \ge \frac{n}{r^2}$ holds,

\noindent\textbf{(ii)} if $r\le n^{1/2}$, then $m(n,r)=\Theta(\frac{n}{r^2})$ holds,

\noindent\textbf{(iii)} if $\omega(n^{1/2})=r=o(n)$ and $r(n)/n$ is monotone, then there exist infinitely many values $n'$ and $r'=r'(n')$ such that $m(n',r'(n'))=1$ and $r\sim r'$ holds.
\end{theorem}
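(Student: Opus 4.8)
For part (i) I would argue by double counting the degrees. Write $d(x)=|\{F\in\cF:x\in F\}|$, so that $\sum_{x\in[n]}d(x)=r|\cF|$. Fix any $F_0\in\cF$. Since $\cF$ is intersecting, every $F\in\cF$ meets $F_0$, hence contains one of its $r$ elements, and the union bound gives $|\cF|\le\sum_{x\in F_0}d(x)\le r\De(\cF)$. On the other hand, as $\cup\cF=[n]$ every element has degree at least $\delta(\cF)$, so $r|\cF|=\sum_x d(x)\ge n\,\delta(\cF)$. Combining the two, $r\De(\cF)\ge|\cF|\ge \frac{n\,\delta(\cF)}{r}$, that is $\cR(\cF)=\De(\cF)/\delta(\cF)\ge n/r^2$, which is part (i).

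For part (ii) the lower bound is exactly part (i), so only a construction attaining $\cR(\cF)=O(n/r^2)$ is needed. I would choose a prime $q$ with $\tfrac{2}{3}r<q+1\le r-1$ (possible for large $r$ by the density of primes), take a projective plane $PG(2,q)$ on a point set $P\subseteq[n]$ with $|P|=q^2+q+1<n$, and let $\cF$ consist of all $r$-subsets of $[n]$ that contain a full line of $PG(2,q)$. Any two members contain lines, which meet, so $\cF$ is intersecting, and $\cup\cF=[n]$ because $r\ge q+2$ leaves room to complete any point to such a set. For maximality take $G\in\binom{[n]}{r}\setminus\cF$; then $G$ contains no line, and since $r<\tfrac32(q+1)$ lies below Blokhuis's lower bound on the size of a non-trivial blocking set, $G\cap P$ cannot be a blocking set, so some line $\ell$ avoids $G$; padding $\ell$ by $r-q-1$ further points of $[n]\setminus G$ yields a member disjoint from $G$. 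Finally a direct count of $d(x)$ for a plane point versus an added point gives $\De(\cF)/\delta(\cF)=\Theta(n/r^2)$, establishing $m(n,r)=\Theta(n/r^2)$ for $r\le n^{1/2}$.

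For part (iii) I would blow up a projective plane. Fix a prime power $q$ and an odd integer $s$, replace each of the $q^2+q+1$ points of $PG(2,q)$ by a group of $s$ points (so $n'=s(q^2+q+1)$), set $c=(s+1)/2$, and let $\cF$ be the family of all sets $\bigcup_{p\in\ell}A_p$, where $\ell$ ranges over the lines and each $A_p$ is a $c$-subset of the group of $p$; each member has size $r'=(q+1)c=(q+1)(s+1)/2$. Two members share the group of the common point of their lines, where each holds a strict majority, so they meet, and the automorphisms of the plane together with the symmetry inside the groups make $\cF$ regular, whence $\cR(\cF)=1$. For maximality, given $G$ meeting every member, let $D(G)$ be the set of plane points in whose group $G$ has at least $c$ elements; one checks that $G$ meets every member exactly when $D(G)$ is a blocking set, while $|G|=(q+1)c$ forces $|D(G)|\le q+1$. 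Since the only blocking sets of size $q+1$ are lines, $D(G)$ is a line and $G$ is precisely a member, so $\cF$ is maximal. This realises $m(n',r')=1$ with $\sqrt{n'}\ll r'\ll n'$; varying $q$ controls the ratio $r'/n'\sim 1/(2q)$ and varying $s$ controls the scale $n'$, and the monotonicity of $r(n)/n$ lets me solve $r(n')/n'\sim 1/(2q)$ for infinitely many $n'$, giving $r'\sim r(n')$.

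The degree computations and the selection of the prime $q$ are routine; the main obstacle in both constructions is maximality, i.e.\ controlling which $r$-sets are blocking sets. In part (ii) this is exactly where Blokhuis's lower bound on non-trivial blocking sets is indispensable, since it is what forbids a line-free $r$-set from meeting every member, and in part (iii) the corresponding point is the identification of the dominant point set $D(G)$ with a line through the characterisation of minimum blocking sets; the final bookkeeping there is matching the discrete parameters $(q,s)$ to the target function via the monotonicity hypothesis.
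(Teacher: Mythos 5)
Your parts (i) and (ii) follow the paper essentially verbatim: the same averaging argument $\Delta(\cF)\ge|\cF|/r$ versus average degree $r|\cF|/n\ge\delta(\cF)$ for the lower bound, and the same construction for (ii) --- all $r$-sets containing a line of a plane of prime order $q\approx\frac{3}{4}r$ placed inside $[n]$, with maximality from Blokhuis's theorem since $r<\frac{3}{2}(q+1)$, and the two-valued degree count giving $\cR=\Theta(n/r^2)$; the paper does exactly this (with Nagura's theorem supplying the prime, and the explicit computation confirming your claimed degree ratio). Part (iii), however, is a genuinely different and correct construction. The paper takes $k$ \emph{disjoint} planes of order $p$, $n'=k(p^2+p+1)$, and lets members contain full lines in a \emph{majority of the planes}, padded by $s\le p/2$ free points; maximality again leans on Blokhuis, so $p$ must be prime, and the slack $s$ gives a small range of achievable $r'$ per pair $(k,p)$. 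You instead blow up a \emph{single} plane of order $q$ into groups of size $s$ and take majorities \emph{within each group} along a line; your maximality argument is both correct and more elementary: if $G$ meets every member then the dominant set $D(G)$ must be a blocking set (otherwise a line avoiding $D(G)$ leaves a complementary majority in each of its groups, yielding a disjoint member), while $|D(G)|\le|G|/c=q+1$ forces $D(G)$ to be a line by the classical characterization of minimum blocking sets --- no Blokhuis needed, so any prime power $q$ works, giving a denser supply of parameters. What the paper's route buys is the additive tuning parameter $s$ and reuse of the same machinery as in (ii); what yours buys is a cleaner, tight maximality proof ($|D(G)|=q+1$ exactly, so $G$ is literally a member) and weaker arithmetic hypotheses on $q$. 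The final bookkeeping is the same in both: $q\sim n/(2r)$, multiplicity $s\sim 4r^2/n$ (the paper's $p$ and $k$), with $r=\omega(n^{1/2})$, $r=o(n)$ ensuring both parameters tend to infinity and the monotonicity of $r/n$ transferring the asymptotics to the realized pairs $(n',r')$; your sketch of this step is at the same level of detail as the paper's.
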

\section{Proofs}
\label{sec:2}
In this section we prove \tref{max} and \tref{min}.

\begin{proof}[Proof of \tref{max}] To prove \textbf{(i)}, let us consider a maximal intersecting family $\cF \subseteq \binom{[n]}{r}$. Let us partition $\cF$ into two subfamilies $\cF_1$ and $\cF_2$ where $\cF_1:=\{F \in \cF: \exists x \in F \hskip 0.2truecm \text{such that}\ F\setminus \{x\}\cap F'\ne \emptyset \hskip 0.2truecm \text{for all}\ F' \in \cF\}$ and $\cF_2=\cF \setminus\cF_1$. 
\begin{claim}
\label{clm:recursion}
$|\cF_2| \le r^r$.
\end{claim}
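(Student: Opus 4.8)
The plan is to recast $\cF_2$ in terms of \emph{transversals} (covers) of $\cF$ and then bound the number of such objects by a branching argument. Since $\cF$ is intersecting, every member meets every other member, so each $F \in \cF$ is itself a transversal of $\cF$, i.e.\ a set meeting every edge of $\cF$. Unwinding the definitions, $F \in \cF_1$ exactly when some $(r-1)$-subset $F \setminus \{x\}$ is still a transversal; hence $F \in \cF_2$ exactly when no $(r-1)$-subset $F \setminus \{x\}$ is a transversal, which (as transversality is preserved under taking supersets) means no proper subset of $F$ is a transversal. Combined with the fact that $F$ itself is a transversal, this says that $\cF_2$ is precisely the collection of those members of $\cF$ that are \emph{minimal} transversals of $\cF$. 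So it suffices to show that $\cF$ has at most $r^r$ minimal transversals of size $r$.

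To count these, I would build each minimal transversal $G \in \cF_2$ greedily along a bounded decision tree. Fix once and for all an ordering of $\cF$, together with a rule that assigns to any set $S \subseteq [n]$ that is not a transversal the first edge of $\cF$ disjoint from $S$. Starting from $S_0 = \emptyset$, at each stage I proceed as follows: given $S_j = \{x_1, \dots, x_j\} \subseteq G$, if $S_j$ is already a transversal I stop, and otherwise I let $F^{(j+1)}$ be the edge assigned to $S_j$ by the rule and choose some $x_{j+1} \in G \cap F^{(j+1)}$, which is nonempty because $G$ is a transversal. Since $F^{(j+1)}$ is disjoint from $S_j$, the chosen elements $x_1, x_2, \dots$ are distinct elements of $G$, so the process halts after at most $r$ steps.

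The key point, where minimality enters, is that the set $S_j$ produced at termination equals $G$: the process stops only when $S_j$ becomes a transversal, and since $S_j \subseteq G$ with $G$ a \emph{minimal} transversal, $S_j$ being a transversal forces $S_j = G$. Thus the process runs for exactly $r$ steps and recovers $G = \{x_1, \dots, x_r\}$, so the map $G \mapsto (x_1, \dots, x_r)$ is injective. To finish I count the possible sequences: at each step the edge $F^{(j+1)}$ is a deterministic function of $S_j$ (through the fixed rule), so the number of choices for $x_{j+1}$ is at most $|F^{(j+1)}| = r$. Hence the decision tree has branching at most $r$ at each of its at most $r$ levels, bounding the number of root-to-leaf paths, and therefore $|\cF_2|$, by $r^r$.

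I expect the main obstacle to be not any single computation but getting the bookkeeping of the branching process exactly right. One must make the choice of the auxiliary edge $F^{(j+1)}$ a deterministic function of the current partial set $S_j$, so that the tree genuinely has branching at most $r$ per level rather than depending on unboundedly many edges; and one must invoke minimality at precisely the right moment, both to guarantee that the process does not terminate prematurely on a proper subset of $G$ and to ensure that $G$ can be reconstructed from its sequence of choices.
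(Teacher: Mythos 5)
Your proof is correct and takes essentially the same route as the paper: both arguments rest on the fact that any proper subset $J$ of a member of $\cF_2$ has some edge of $\cF$ disjoint from it, so that the members of $\cF_2$ extending $J$ branch over the at most $r$ elements of that edge. The paper organizes this as the recursion $d_j \le r\,d_{j+1}$ on the maximum number of members of $\cF_2$ through a common $j$-set, which is exactly your branching-factor-$r$, depth-$r$ decision tree read level by level; your reformulation of $\cF_2$ as the minimal transversals of $\cF$ and the deterministic edge-selection rule are just careful bookkeeping for the same idea.
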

\begin{proof}[Proof of Claim] Let $d_j$ denote the maximum number of sets in $\cF_2$ that contain the same $j$-subset. For any $j<r$ and $j$-subset $J$ that is contained in some $F \in \cF_2$ there exists an $F' \in \cF$ with $J \cap F'=\emptyset$. Thus $d_j \le rd_{j+1}$ holds. Since $d_r=1$ and $|\cF_2| \le rd_1$, the claim follows.
\end{proof}
Let $\tau$ denote the covering number of $\cF$, i.e. the minimum size of a set meeting all sets of $\cF$. Clearly, if $\tau=r$, then $\cF_1=\emptyset$ and thus by \clref{recursion} $|\cF| \le r^r$ and $\cR(\cF) \le r^r$. 

Assume $\tau<r$. We will show a roughly $n$ to 1 mapping $f$ from $\cF_1$ to $\cF_{min}$, the subfamily containing one fixed vertex $y$ of minimum degree. For any $F \in \cF_1$ let $g(F)$ be an element of $F$ so that $F \setminus \{g(F)\} \cap F' \ne \emptyset$ for all $F'\in \cF$ (such an element exists by definition of $\cF_1$). Let us define $f(F)=F$ if $y \in F$ and $f(F)=F \setminus \{g(F)\} \cup y$ if $y \notin F$. Note that $f(F)\in \cF$ as already $F \setminus \{g(F)\}$ meets all sets in $\cF$ and by assumption $\cF$ is a maximal intersecting family. Observe that at most $n-r+1$ sets can be mapped to the same set $G$ as all such sets should contain $G \setminus \{y\}$. This concludes the proof of \textbf{(i)} as $\cR(\cF) \le \cR(\cF_1)+|\cF_2|$.

To prove \textbf{(ii)} we need to show a construction. Let us write $S=[2,2r-2]$ and $S_0=[2,r-1]$ and define
\[
\cF_1=\left\{\{1\} \cup G: G \in \binom{S}{r-1}\right\}, \hskip 0.3truecm \cF_2=\left\{\{1,i\}\cup H: 2r-1\le i \le n, H\in \binom{S}{r-2}\setminus S_0\right\}, 
\]
\[
\cF_3=\binom{S}{r}, \hskip 0.3truecm \cF_4=\{(S\setminus S_0) \cup \{i\}: 2r-1 \le i \le n\}, \hskip 0.3truecm \cF=\cup_{j=1}^4\cF_j.
\]
\begin{claim}
\label{clm:maxexample} The family $\cF$ is maximal intersecting.
\end{claim}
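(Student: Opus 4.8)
The plan is to show two things: that $\cF$ is intersecting, and that it is maximal, i.e.\ that any $r$-set disjoint from some member of $\cF$ has already been added. Let me first understand the structure. We have a ground set $[n]$, a ``core'' block $S=[2,2r-2]$ of size $2r-3$, and its initial segment $S_0=[2,r-1]$ of size $r-2$. The element $1$ plays a distinguished role, and the elements $2r-1,\dots,n$ are ``tail'' elements that each appear only together with large chunks of $S$.

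Let me sketch the structure and the proof plan.

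\medskip

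\textbf{Intersecting property.} First I would verify that any two members of $\cF$ meet. The key observation is that $\cF_1\cup\cF_3$ are all sets that either contain $1$ together with an $(r-1)$-subset of $S$, or are $r$-subsets of $S$; since $|S|=2r-3<2(r-1)$, any two $(r-1)$-subsets of $S$ meet, and similarly an $(r-1)$-subset and an $r$-subset of $S$ meet, so all pairs within $\cF_1\cup\cF_3$ intersect. For the tail families $\cF_2,\cF_4$, I would note that every member of $\cF_2$ contains $1$, so it meets every set containing $1$; and every member of $\cF_4$ contains the large set $S\setminus S_0$ of size $r-1$, which meets every $(r-1)$- or $r$-subset of $S$ and also meets the $H$-part of any $\cF_2$-member. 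The only genuinely delicate pairs are $\cF_2$ with $\cF_4$ and $\cF_4$ with itself: here I would use that the $H$ in an $\cF_2$-member is an $(r-2)$-subset of $S$ that is \emph{not} $S_0$, forcing it to meet $S\setminus S_0$; and two $\cF_4$-members share the common block $S\setminus S_0$. This case analysis is routine but must be done carefully to confirm that each forbidden ``all-tail, disjoint'' configuration is excluded by the design (notably the exclusion of $H=S_0$ in $\cF_2$).

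\medskip

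\textbf{Maximality.} This is the main obstacle and the heart of the claim. I would take an arbitrary $G\in\binom{[n]}{r}\setminus\cF$ and exhibit some $F\in\cF$ with $F\cap G=\emptyset$. The natural way to organize this is by how $G$ meets the three zones $\{1\}$, $S$, and the tail $T:=[2r-1,n]$. The cleanest approach is to argue contrapositively: assume $G$ meets every member of $\cF$ and deduce $G\in\cF$. Hitting all of $\cF_3=\binom{S}{r}$ forces $|G\cap S|\ge |S|-r+1=r-2$, so $G$ can contain at most two elements outside $S$. Then I would split into cases according to whether $1\in G$ and how many tail elements $G$ contains, and in each case use that $G$ must meet the specific sparse families $\cF_2$ and $\cF_4$ to pin down its intersection pattern with $S$, $S_0$, and $T$ exactly, eventually matching one of the four defining forms. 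The tightest constraints come from $\cF_4$: to hit every $(S\setminus S_0)\cup\{i\}$ simultaneously, $G$ must either meet $S\setminus S_0$ or contain every tail element, and the latter is impossible once $n$ is large (here the hypothesis $2r+2<n$, giving enough room, should be used), so $G$ must meet $S\setminus S_0$.

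\medskip

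I expect the bookkeeping in the maximality direction to be where almost all the work lies, since one must rule out every ``stray'' $r$-set not of the four prescribed shapes, and the families $\cF_2$ and $\cF_4$ are precisely the gadgets that close off the escape routes. In particular, I would pay closest attention to $r$-sets of the form $\{1\}\cup(\text{an }(r-2)\text{-subset of }S)\cup\{\text{one tail element}\}$ where the $(r-2)$-subset equals $S_0$: this is exactly the configuration deliberately omitted from $\cF_2$, so I must check that such a $G$ is in fact \emph{not} intersecting—that it misses some member of $\cF$ (for instance an $\cF_4$-set on a different tail element)—thereby confirming that its omission from $\cF$ does not violate maximality. Verifying this consistency between ``what we left out'' and ``what is genuinely disjoint from something'' is the subtle point that ties the whole construction together.
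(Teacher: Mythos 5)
Your proposal is correct and takes essentially the same approach as the paper: the intersecting part uses the same size/pigeonhole arguments (with $H\neq S_0$ handling the $\cF_2$--$\cF_4$ pairs), and your maximality argument runs through the same case analysis governed by $|G\cap S|$ with the same gadgets ($\cF_3$ forcing $|G\cap S|\ge r-2$, an $\cF_2$-set on a fresh tail element, and an $\cF_4$-set on a different tail element for the case $1\in G$, $G\cap S=S_0$). The only cosmetic difference is that the paper states maximality directly (for each $T\notin\cF$ it exhibits a disjoint member) rather than in your contrapositive form.
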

\begin{proof}[Proof of Claim]
$\cF$ is clearly intersecting as all of its sets, except those coming from $\cF_2$, meet $S$ in at least $r-1$ elements. A set $F_2$ from $\cF_2$ meets any other from $\cF_1 \cup \cF_2$ as they both contain $1$, a set from $\cF_3$ because of the pigeon-hole principle, and a set from $\cF_4$ as by definition $F_2 \cap S \ne S_0$.

To prove the maximality of $\cF$ let us consider a set $T\notin \cF$. If $|T \cap S|<r-2$, then any $r$-subset of $S\setminus T$ is in $\cF$ and thus $T$ cannot be added to $\cF$. As all $r$-subsets of $S$ are already in $\cF$, it remains to deal with
the cases $|T \cap S|=r-1$ and $|T \cap S|=r-2$. If $|T\cap S|=r-1$, then $1 \notin T$ as those sets are in $\cF_1$ and $T \cap S\ne S \setminus S_0$ as those sets are in $\cF_4$. But then a set $F$ from $\cF_2$ with $F\cap S=S\setminus T$ is disjoint from $T$, thus $T$ cannot be added to $\cF$.

Finally, if $|T\cap S|=r-2$, then if $1 \notin T$, then $\{1\} \cup (S \setminus T) \in \cF_1$ is disjoint from $T$ and thus $T$ cannot be added to $\cF$. If $1 \in T$, then, as $T \notin \cF_2$, we must have $T\cap S=S_0$. Then we can find a set disjoint from $T$ in $\cF_4$.
\end{proof}

All we have to observe is that in $\cF$ the degree of 1 is $\binom{2r-3}{r-1}+(\binom{2r-3}{r-2}-1)(n-2r+2)$ and the degree of $i$ for any $2r-1\le i \le n$ is $\binom{2r-3}{r-2}$.
\end{proof}

Note that the proof of \tref{max} \textbf{(i)} gives an upper bound $M(n,r) \le n+r^r$ for any value of $r$ and $n$.

\begin{conjecture}
\label{conj:bigmax} If $r=o(n)$ holds, then the order of magnitude of $M(n,r)$ is $\Theta(n)$.
\end{conjecture}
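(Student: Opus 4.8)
We outline a possible route to \cjref{bigmax}, separating the two halves of the claimed estimate $M(n,r)=\Theta(n)$.

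The lower bound $M(n,r)=\Omega(n)$ is already available. For $r\ge 3$ and $n$ large enough that $2r+2<n$, \tref{max}~\textbf{(ii)} yields
\[
M(n,r)\ge n-2r+3-\frac{n-2r+2}{\binom{2r-3}{r-2}} .
\]
Since $r=o(n)$ gives $n-2r+3\sim n$, while $\binom{2r-3}{r-2}\ge 3$ makes the subtracted term at most $n/3$ (and in fact $o(n)$ as soon as $r\to\infty$), we get $M(n,r)\ge(\tfrac23-o(1))n=\Omega(n)$; the case $r=2$ is covered by the full star, which is maximal for $n\ge 4$ and has $\cR=n-1$. Thus the substance of the conjecture is the matching upper bound $M(n,r)=O(n)$, and the problem is to improve the estimate $M(n,r)\le n+r^r$ of \tref{max}~\textbf{(i)} to $O(n)$ throughout the range $r=o(n)$.

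For the upper bound I would start from the decomposition $\cF=\cF_1\cup\cF_2$ used in the proof of \tref{max}~\textbf{(i)}. The near-$n$-to-$1$ map into the sets through a minimum-degree vertex gives $|\cF_1|\le(n-r+1)\delta(\cF)$, and combining this with $\Delta(\cF)\le|\cF|=|\cF_1|+|\cF_2|$ yields
\[
\cR(\cF)\le (n-r+1)+\frac{|\cF_2|}{\delta(\cF)} .
\]
Since the first term is already $O(n)$, it suffices to prove $|\cF_2|=O\!\left(n\,\delta(\cF)\right)$ for every maximal intersecting $\cF$ with $\cup\cF=[n]$ --- equivalently, that the minimum degree is at least a constant multiple of $|\cF_2|/n$. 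The plan is to control the cover-critical family $\cF_2$ through the covering number $\tau=\tau(\cF)$: localise $\cF_2$ to a bounded core built from a minimum transversal together with the criticality witnesses $F'_x$ (the sets with $F'_x\cap F=\{x\}$), and then argue that, once $\cup\cF=[n]$, maximality forces the sets needed to reach the remaining $\sim n$ vertices to pass through the core in proportionally many ways, forcing $\delta(\cF)$ to grow proportionally.

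The hard part will be exactly this coupling of $\delta(\cF)$ with $|\cF_2|$. The recursion $d_j\le r\,d_{j+1}$ behind the bound $|\cF_2|\le r^r$ is essentially tight (\sref{3}), so $|\cF_2|$ cannot be bounded by $O(n)$ outright; the spanning hypothesis must be used to force $\delta(\cF)=\Omega(|\cF_2|/n)$. The obstruction is sharpest in the window $\frac{\log n}{\log\log n}\le r=o(n)$, where $r^r\gg n$ and the vertex set supporting $\cF_2$ need no longer be negligible compared with $n$; there the localisation of $\cF_2$ and the extraction of the required minimum-degree bound appear to need a genuinely new structural description of cover-critical intersecting families. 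This is the step I expect to resist the present techniques, and it is presumably why the statement is recorded as a conjecture rather than a theorem.
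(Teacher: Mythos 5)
You are attempting to prove \cjref{bigmax}, which the paper records as an open conjecture: there is no proof of it in the paper to compare against, and --- as you yourself concede in your final paragraph --- your proposal does not prove it either. What you do get is correct and worth stating precisely. The lower bound is exactly the paper's own content: \tref{max} \textbf{(ii)} gives $M(n,r)\ge n-2r+3-\frac{n-2r+2}{\binom{2r-3}{r-2}}=\Omega(n)$ for $r\ge 3$, $r=o(n)$, and the full star disposes of $r=2$. Your reduction of the upper bound is also sound: the paper's map $f$ really yields $|\cF_1|\le (n-r+1)\,\delta(\cF)$, hence $\cR(\cF)\le (n-r+1)+|\cF_2|/\delta(\cF)$ (and this subsumes the case $\tau=r$, where $\cF_1=\emptyset$), so the conjecture would follow from $|\cF_2|=O(n\,\delta(\cF))$ for spanning maximal intersecting families with $r=o(n)$.

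The genuine gap is the step you flag yourself: nothing in the proposal establishes $\delta(\cF)=\Omega(|\cF_2|/n)$. The ``localisation of $\cF_2$ to a bounded core'' is a plan, not an argument --- you never say why maximality together with $\cup\cF=[n]$ forces the $\sim n$ vertices outside such a core to have degree proportional to $|\cF_2|/n$, and the paper's example in \sref{3} shows that this coupling is simply false without essential use of $r=o(n)$: for $n=2r$ the family $\cF^*$ has $\delta(\cF^*)=1$ while essentially all of its $\Theta\bigl(\binom{n}{r}\bigr)$ members lie in $\cF_2$, so $|\cF_2|\gg n\,\delta$. Two smaller inaccuracies in your discussion: that same $n=2r$ example is the paper's only evidence about the quality of the bound $|\cF_2|\le r^r$, and it lies outside the regime $r=o(n)$; moreover it gives $e^{\Theta(n)}$ against $r^r=e^{\Theta(n\log n)}$, so the recursion $d_j\le r\,d_{j+1}$ is not shown to be ``essentially tight,'' nor does the example show that $|\cF_2|$ can exceed $O(n)$ when $r=o(n)$. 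In short, your write-up is an accurate map of the problem --- lower bound known, upper bound open precisely in the window $\frac{\log n}{\log\log n}\lesssim r=o(n)$ --- but it is a research programme rather than a proof, which is consistent with the statement being a conjecture.
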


\vskip 0.5truecm

Now we turn our attention to the function $m(n,r)$. In the proof of \tref{min} we will use the following theorem by Blokhuis on blocking sets of projective planes (for a short survey on the topic see \cite{Sz}).

\begin{theorem}[Blokhuis, \cite{B}]
\label{thm:prplane}
Let $Q$ be a projective plane of order $q$ and $B$ be a blocking set (a set that meets all lines of the projective plane) of size less than $\frac{3}{2}(q+1)$. If $q$ is prime, then $B$ contains a line of the projective plane.
\end{theorem}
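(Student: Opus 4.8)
The plan is to prove the contrapositive by the polynomial method of R\'edei and Blokhuis: assuming that $B$ is a blocking set of $Q=\mathrm{PG}(2,q)$ with $q=p$ prime that contains no line, I will show $|B|\ge \frac32(q+1)$. I may assume $B$ is minimal, since a general blocking set contains a minimal one of no larger size and a line in the latter is a line in the former. In a minimal blocking set every point lies on a tangent line (a line meeting $B$ in exactly that point), as otherwise the point could be deleted. Fix such a tangent $\ell_\infty$, meeting $B$ only in a point $P$, and coordinatize $Q\setminus\ell_\infty=\mathrm{AG}(2,q)$ so that $P=(\infty)$ is the vertical direction. Writing the affine part as $U=B\setminus\{P\}$, the central object is the R\'edei polynomial
\[
H(X,Y)=\prod_{(a,b)\in U}\bigl(X+aY-b\bigr)\in\mathbb{F}_q[X,Y],
\]
which is monic of degree $|U|=|B|-1$ in $X$; for a fixed slope $m$, the roots of $H(X,m)$ are exactly the intercepts $b-am$ of the lines of slope $m$ through the points of $U$.

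The first real step converts the blocking property into divisibility, one specialization at a time. Because $\ell_\infty$ meets $B$ only in $P=(\infty)$, no affine direction $(m)$, $m\in\mathbb{F}_q$, lies in $B$, so each of the $q$ lines of slope $m$ must contain a point of $U$; hence the multiset of intercepts $\{b-am:(a,b)\in U\}$ covers every element of $\mathbb{F}_q$. Equivalently, for each $m$ the polynomial $X^q-X$ divides $H(X,m)$, and the quotient
\[
g_m(X)=\frac{H(X,m)}{X^q-X}=\prod_{c\in\mathbb{F}_q}(X-c)^{\,\mu_m(c)-1}
\]
is again fully reducible over $\mathbb{F}_q$, of degree $k:=|B|-(q+1)$; here $\mu_m(c)\ge 1$ is the number of points of $U$ on the line of slope $m$ and intercept $c$, so $g_m$ records precisely the secant structure of $B$ in direction $m$.

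The decisive step is the theory of fully reducible lacunary polynomials over a prime field. Every $g_m$ splits completely over $\mathbb{F}_q$ while having the small degree $k$. Encoding the whole family through the two-variable identity $H(X,Y)=(X^q-X)G(X,Y)+(Y^q-Y)S(X,Y)$ with $\deg_X S<q$, which forces $G(X,m)=g_m(X)$, I would invoke R\'edei's classification of fully reducible lacunary polynomials in the sharp form due to Blokhuis: over $\mathbb{F}_p$ a fully reducible polynomial $X^p+h(X)$ that is not a $p$-th power satisfies $\deg h\ge\frac{p+1}{2}$. An analysis of $G$ and of the directions $m$ for which $g_m$ acquires a repeated factor then forces the dichotomy that either $k=0$, in which case all points of $U$ are collinear and $B$ contains the projective line through them and $P$, or else $k\ge\frac{q+1}{2}$, in which case $|B|=q+1+k\ge\frac32(q+1)$. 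No intermediate value of $k$ can occur, and this is exactly the assertion of the theorem.

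I expect R\'edei's lacunary classification to be the main obstacle, and it is precisely here that the primality of $q$ is indispensable. The mechanism is the Fermat identity $x^p=x$ on $\mathbb{F}_p$ together with the absence of proper subfields: these are what push the gap in $X^p+h(X)$ all the way up to $\deg h\ge\frac{p+1}{2}$. Over non-prime orders the identical scheme genuinely fails --- Baer subplanes give nontrivial blocking sets of size $q+\sqrt q+1$, far below $\frac32(q+1)$, and they arise exactly from an intermediate subfield --- so no argument indifferent to primality can reach the bound. The remaining technical work is the bookkeeping of the multiplicities $\mu_m(c)$ (equivalently the secant and tangent structure of $B$) and the control of $\gcd\bigl(G(X,Y),\,\partial G/\partial X\bigr)$ that feeds the degree estimate.
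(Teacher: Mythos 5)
First, a point of comparison: the paper does not prove this statement at all. Theorem~\ref{thm:prplane} is imported verbatim from Blokhuis's paper \cite{B} and used as a black box in the proof of Theorem~\ref{thm:min}, so there is no in-paper argument to measure your attempt against; the only fair comparison is with Blokhuis's actual proof. Your sketch does follow the genuine route --- R\'edei polynomial of the affine part of a minimal blocking set with a tangent line at infinity, divisibility of $H(X,m)$ by $X^q-X$ for every affine slope $m$, the two-variable division identity, and the R\'edei--Blokhuis theory of lacunary polynomials --- so the strategy is the correct one.

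As a proof, however, there is a genuine gap exactly at the step you yourself call decisive. The lemma you quote is misstated: $X^p-X$ is fully reducible over $\mathbb{F}_p$, is not a $p$-th power, and has $\deg h=1$, so the correct statement must carry this exception (which is not a nuisance --- in the application it is precisely the case $k=0$, i.e.\ the case where $B$ is a line). More seriously, the lemma is never actually applied to anything. The only lacunary polynomial your setup produces is $H(X,m)=(X^p-X)g_m(X)=X^p g_m(X)-Xg_m(X)$, whose two parts $g_m$ and $-Xg_m$ share the common factor $g_m$; the coprimality hypothesis of any usable version of the lemma fails, and dividing out the common factor leaves only the tautology $X^p-X$. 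Bridging this obstruction is the real content of Blokhuis's paper: one must exploit minimality (every point lies on a tangent), the multiplicity structure $g_m(X)=\prod_{c}(X-c)^{\mu_m(c)-1}$, and derivative relations extracted from the two-variable identity, such as $g_m(X)\mid (X^p-X)\,\partial_X G(X,m)$, in order to manufacture a fully reducible polynomial of the \emph{coprime} lacunary form to which the $\frac{p+1}{2}$ degree bound applies, and thereby exclude every value $0<k<\frac{q+1}{2}$. You gesture at this (``analysis of $G$\dots control of $\gcd\bigl(G,\partial G/\partial X\bigr)$'') but do not carry it out, and it is not routine bookkeeping --- it is the theorem. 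A smaller point: you silently replace ``projective plane of order $q$'' by $\mathrm{PG}(2,q)$; the polynomial method requires coordinates, so it proves only the Desarguesian case --- which is indeed all that \cite{B} asserts and all the paper's constructions use, but it is worth flagging that the statement as printed, for abstract planes of prime order, is not what this method (or any known method) delivers.
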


We will also need the following strengthening of Chebyshev's theorem.

\begin{theorem}[Nagura, \cite{N}]
\label{thm:prime} For every integer $n \ge 25$ there exists a prime $p$ with $n \le p \le (1+1/5)n$.
\end{theorem}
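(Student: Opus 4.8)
The statement is Nagura's refinement of Bertrand's postulate: every interval $[n,\tfrac65 n]$ with $n\ge 25$ contains a prime. The plan is to follow Chebyshev's analytic method, reducing the existence of a prime in $(n,\tfrac65 n]$ to a positive lower bound on $\vartheta(\tfrac65 n)-\vartheta(n)$, where $\vartheta(x)=\sum_{p\le x}\log p$ is the first Chebyshev function. Since $\vartheta(\tfrac65 n)-\vartheta(n)=\sum_{n<p\le \frac65 n}\log p$, any such positive lower bound forces a prime in the half-open interval $(n,\tfrac65 n]$; combined with a check of whether $n$ itself is prime, this yields the closed-interval statement.

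First I would establish two-sided bounds of the shape $c_1 x<\vartheta(x)<c_2 x$, valid for all $x$ above an explicit threshold $x_0$, with constants satisfying the crucial inequality $\tfrac65 c_1>c_2$. Granting such bounds, for every $n\ge x_0$ we obtain
\[
\vartheta\!\left(\tfrac65 n\right) > c_1\cdot\tfrac65 n > c_2 n > \vartheta(n),
\]
forcing a prime in $(n,\tfrac65 n]$. The finitely many integers $25\le n<x_0$ are then dispatched by direct verification, i.e.\ by checking that consecutive primes in this initial range never grow by a factor exceeding $\tfrac65$ (equivalently, by exhibiting one prime in each required interval).

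The heart of the matter, and the main obstacle, is producing constants with $c_2/c_1<\tfrac65=1.2$. The naive Chebyshev bounds coming from the central binomial coefficient $\binom{2n}{n}$ give a ratio of roughly $1.106/0.921\approx 1.201$, which just fails to clear $1.2$. To sharpen them I would pass to the second Chebyshev function $\psi(x)=\sum_{p^k\le x}\log p$ through the identity $\log\lfloor x\rfloor!=\sum_{n\ge 1}\psi(x/n)$, and then form a weighted combination $\sum_n a_n\log\lfloor x/n\rfloor!$ whose integer coefficients $a_n$ form a periodic pattern generalizing Chebyshev's $+1,-1,-1,-1,+1$ on residues modulo $30$. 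Telescoping isolates $\psi(x)$ with the leading constant determined by $\sum_n a_n(\log n)/n$, while Stirling's formula controls each $\log\lfloor x/n\rfloor!$ up to lower-order terms. One finally converts to $\vartheta$ via $\vartheta(x)=\psi(x)-\vartheta(x^{1/2})-\vartheta(x^{1/3})-\cdots$, absorbing the tail into an $O(\sqrt{x}\log x)$ error.

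The delicate quantitative step is choosing the finite coefficient pattern so that the resulting constants clear the $1.2$ threshold with room to spare for the error terms, while simultaneously keeping $x_0$ small enough that the residual finite verification remains a manageable computation. In the present paper the result is only invoked as an external tool in the proof of \tref{min}, so one may simply cite Nagura; the sketch above records how a self-contained argument would go.
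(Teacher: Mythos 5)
The paper contains no proof of this statement to compare against: it is imported verbatim from Nagura's paper \cite{N} and used purely as a black box in the proof of \tref{min}. So the only available comparison is with the cited source, and your sketch does follow its actual route: two-sided Chebyshev-type bounds $c_1x<\vartheta(x)<c_2x$ with $c_2/c_1<6/5$, the correct reduction $\vartheta(\tfrac65 n)>c_1\cdot\tfrac65 n>c_2 n>\vartheta(n)$ forcing a prime in $(n,\tfrac65 n]$, and a finite verification below the analytic threshold. As a plan, this is faithful to how the theorem is really proved.

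As a proof, however, it has a genuine gap: the step you explicitly defer is the entire content of the theorem. You never exhibit constants $c_1,c_2$ with $c_2/c_1<\tfrac65$, nor the coefficient pattern $a_n$ producing them, nor the threshold $x_0$, nor the finite check for $25\le n<x_0$; asserting that one ``chooses the finite coefficient pattern so that the resulting constants clear the $1.2$ threshold'' is exactly what must be proved. Note also a misattribution that masks how delicate this is: the constants $\approx 0.921$ and $\approx 1.106$ you quote do not come from the central binomial coefficient $\binom{2n}{n}$ (that yields $\log 2$ and roughly $2\log 2$, a ratio near $2$); they are already the output of Chebyshev's weighted combination over the divisors of $30$ (arguments $x, x/2, x/3, x/5, x/30$), whose ratio is \emph{exactly} $6/5$ --- leaving zero room for error terms. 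Nagura's contribution was precisely to break this deadlock, working with $\psi$ via Gamma-function (Stirling) estimates rather than floor-function counting and comparing bounds at shifted arguments, together with explicit numerics; your sketch presupposes that this can be done rather than doing it. Since the paper only needs the statement as an external tool, citing \cite{N} --- as you yourself note --- is the appropriate resolution, but what you have written is a proof outline, not a proof.
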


\begin{proof}[Proof of \tref{min}]
To prove \textbf{(i)} we make the following two easy observations: for any intersecting family $\cF$ we have $\Delta(\cF)\ge |\cF|/r$ as for any set $F \in \cF$ the inequality $\sum_{x \in F}d(x) \ge |\cF|$ holds. Also, the average degree in $\cF$ equals $\frac{r|\cF|}{n}$. As the average degree is at least as large as the minimum degree, we obtain
\[
\cR(\cF)=\frac{\Delta(\cF)}{\delta(\cF)} \ge \frac{\frac{|\cF|}{r}}{\frac{r|\cF|}{n}}=\frac{n}{r^2}.
\]
Note that the proof does not use the fact that $\cF$ is maximal.

To prove \textbf{(ii)} and \textbf{(iii)} we need constructions. Suppose first that $r \le n^{1/2}$ holds. By \tref{prime} we can pick a prime $p$ such that $\frac{2}{3}r<p<\frac{2}{3}(1+\frac{1}{5})r=\frac{4}{5}r$. Let $P$ denote a projective plane of order $p$ with vertex set $[p^2+p+1]$. Let us define the following maximal intersecting family
\[
\cF_{n,r,p}=\left\{F \in \binom{[n]}{r}: l \subset F \hskip 0.3truecm \text{for some line}\ l \in P\right\}.
\]
Note that $\cF_{n,r,p}$ is intersecting as any two of its sets intersect as they both contain lines of a projective plane and 
$\cF_{n,r,p}$ is maximal as if $G \in \binom{[n]}{r}$ does not contain any line of $P$, then by \tref{prplane} and $r <\frac{3p}{2}$ we know that there exists a line $l$ in $P$ such that $l \cap G=\emptyset$ and this line can be extended to a set
$F_l \supset l$ such that $F_l \cap G=\emptyset$ holds. As every vertex is contained in $p+1$ lines of $P$ we have that $d(x)=(p+1)\binom{n-p-1}{r-p-1}+p^2\binom{n-p-2}{r-p-2}$ if $x \in [p^2+p+1]$, while for any $y \in [p^2+p+2,n]$ we have $d(y)=(p^2+p+1)\binom{n-p-2}{r-p-2}$. Therefore we obtain
\[
\cR(\cF_{n,p,r})=\frac{p^2\binom{n-p-2}{r-p-2}+(p+1)\binom{n-p-1}{r-p-1}}{(p^2+p+1)\binom{n-p-2}{r-p-2}}\le1+ \frac{1}{p+1}\cdot\frac{n-p-1}{r-p-1} \le \frac{25}{4}\cdot \frac{n}{r^2}
\]
where the last inequality follows from $p \le \frac{4}{5}r$. Note that by the prime number theorem one can pick $p$ such that $p\sim \frac{2}{3}r$ provided that $r$ is large enough, and thus improve the constant $\frac{25}{4}$ to $\frac{9}{4}$.

It remains to prove \textbf{(iii)}. Consider the following general construction $\cF'_{k,p,s}\subseteq \binom{[n]}{r}$ where $1 \le k$ is an odd integer, $p$ is a prime, $0\le s \le \frac{p}{2}$ and $n=k(p^2+p+1)$, $r=\frac{k+1}{2}(p+1)+s$. For $1 \le i \le k$ let $P_i$ be a projective plane of order $p$ with underlying set $[(i-1)(p^2+p+1)+1,i(p^2+p+1)]$ and let us define
\[
\cF'_{k,p,s}=\left\{F \in \binom{[n]}{r}: F \hskip 0.2truecm \text{contains a line of}\ P_i \hskip 0.2truecm \text{if}\ i \in I \hskip0.2truecm \text{for some}\ I\in \binom{[k]}{\frac{k+1}{2}}\right\}.
\]
As any two lines of a projective plane intersect each other and so do any $I,I' \in \binom{[k]}{\frac{k+1}{2}}$, the family $\cF'_{k,p,s}$ is intersecting and by \tref{prplane} and $s\le \frac{p}{2}$ we obtain the maximality of $\cF'_{k,p,s}$. As the construction is symmetric, all degrees are equal and therefore we obtain $\cR(\cF'_{k,p,s})=1$.

Assume that we are given a sequence of integers $r=r(n)$ with $r =\omega(n^{1/2})$. Let us pick a prime $p$ with $p\sim \frac{n}{2r}$ and an odd integer $k\sim \frac{4r^2}{n}$. Then we can consider the family $\cF_{k,p,s}$ with any $0 \le s \le p/2$. Its vertex has size $k(p^2+p+1) =n'\sim n$ and by the monotonicity of $r/n$ and $r=\omega(n^{1/2})$ we obtain that the sets of $\cF'_{k,p,s}$ have size $\frac{k+1}{2}(p+1)+s=r' \sim r$.
\end{proof}

\vskip 0.5truecm

\section{Concluding remarks}
\label{sec:3}
As we mentioned in the Introduction, the bound of \tref{max} \textbf{(i)} cannot be strengthened in general as the following example shows. If $n=2r$, then a maximal intersecting family $\cF$ contains one set from every pair of complement sets. Thus the family $\cF^*=\{F \in \binom{[n]}{r}: 1 \notin F, F \neq [r+1,n]\} \cup \{[r]\}$ is maximal intersecting and $\cR(\cF^*)=\Theta(\binom{n}{r})=e^{\Theta(n)}$ holds while $r^r=e^{\Theta(n\log n)}$.

In \tref{min} \textbf{(iii)}, we could show regular maximal intersecting families only for special values of $n$ and $r$. There are two ways to generalize our construction. First, one needs not insist that all projective planes should be of the same order, but for the maximality one still needs that they should be of the same asymptotic order (one will have to choose $s$ a bit more carefully). This will ruin the regularity, but for families $\cF$ obtained this way $\cR(\cF)=1+o(1)$ would still hold. The other possibility is to add extra vertices that do not belong to $\cup P_i$, similarly to the construction used for \tref{min} \textbf{(iii)}. This will enable us to obtain constructions for arbitrary values of $n$ and $r$ (provided $n$ is large enough) but for these families $\cF'$ we will have $\cR(\cF')=\Theta(\frac{r^2}{n})$.

It remains open whether one can construct maximal intersecting families with $\cR$-value $1+o(1)$ for any $r(n)$.

\vskip 0.5truecm

\noindent \textbf{Acknowledgement.} This research was started at the $3^{rd}$ Eml\'ekt\'abla Workshop held in Balatonalm\'adi, June 27-30, 2011.

\end{document}